\documentclass[30pt]{amsart}
\topmargin .35in
\oddsidemargin 0.7in
\evensidemargin 0.7in
\textheight 7.8 in
\textwidth 5.4 in
\addtolength{\baselineskip}{4pt}

\newtheorem{theorem}{Theorem}[section]
\newtheorem{proposition}[theorem]{Proposition}
\newtheorem{lemma}[theorem]{Lemma}

\theoremstyle{definition}
\newtheorem{definition}[theorem]{Definition}

\theoremstyle{remark}
\newtheorem{remark}[theorem]{Remark}

\numberwithin{equation}{section}

\newcommand{\be}{\begin{equation}}
\newcommand{\ee}{\end{equation}}

\newcommand{\R}{\mathbb R}

\title[affine smoothed Wigner]{Smoothed affine Wigner transform}
\author[A. Athanassoulis T. Paul ]{A. Athanassoulis and T. Paul}
\address{CMLS \'Ecole polytechnique, 91 128 Palaiseau cedex}
\email{atah@math.polytechnique.fr}
\address{CNRS and CMLS \'Ecole polytechnique, 91 128 Palaiseau cedex}
\email{paul@math.polytechnique.fr}

\begin{document}
\hfill \textit{\`a la m\'emoire de Jean Morlet}
\vskip 3cm
\date{}
\maketitle

\tableofcontents

\begin{abstract}We study a generalization of Husimi function in the context of wavelets. This leads to a nonnegative  density on phase-space for which we compute the
evolution equation corresponding to a Schr\"odinger equation.
\end{abstract}


\section{Introduction}
Wigner and Husimi functions are important tools both in quantum mechanics and signal analysis. They both provide a
phase-space description of wave functions and signals.

Wigner provides a formulation of quantum mechnaics that seems , at first sight, very close to  transport equations for dynamics of
assembly of classical particles. However an important and inconvenient feature of Wigner functions is the fact that they are not pointwise nonnegative. This important difference with
classical densities causes  not only a problem of interpretation, it also creates technical problems.

The Husimi function is usually seen as the result of an appropriate smoothing the Wigner function, and this smoothing is enough to make it
positive. The simplest way of seeing this is to observe that the Husimi function is nothing but the square modulus of the
scalar product of the original function with a ``gaboret", namely a (Gaussian) coherent state. 

In the case where the original function satisfies a partial differential equation, e.g. a Schr\"odinger equation, 
Wigner and usual Schrd\"odinger representation s have been known to be, modulo a global phase, equivalent. 
Given a Schr\"odinger equation, there is a corresponding Wigner equation (see e.g. \cite{LP}) which is in general a pseudodifferential equation and 
whose limit, as the Planck constant $\hbar$ vanishes, is the classical Liouville equation. Although Wigner and Liouville equations are ``close" for small values
of $\hbar$, the non-positivity of the Wigner function forbids to  interpret it as a classical density, and therefore the Wigner equation
 as a perturbation of the Liouville one inside the paradigm of classical mechanics. It seems therefore natural to somehow try to overcome this difficulty by
describing the quantum system  in the  Husimi picture. However, by doing that, we encounter immediately the difficulty that 
the benefit of positivity is balanced by a  dynamics less easy to derive. This is due to the fact that  smoothing  is hard to ``effectively" invert.


Recently, \cite{AMP}, the equation that is satisfied by the Husimi function of a function, solution of a Schr\"odinger  equation, has been derived. This ``Husimi equation"
 involves in general analytic continuation in the complex domain, and can be expressed without complexification of the arguments in case of 
 analytic Hamiltonians, for example when the original Schr\"odinger operator is 
 a differential operator with polynomial coefficients.

In this paper we will derive the corresponding equation for the affine Husimi function. 
After a review of the standard Weyl-Wigner situation and the main results of \cite{AMP} in Section \ref{usual}, 
we will define  define ``Weyl" quantization  in the 
``ax+b" group setting in Section \ref{weyl}, and show how, using continuous wavelet (affine coherent states) one can define 
positive Husimi functions in Section \ref{wav}. we will then derive the 
corresponding Husimi equation
and express our main theorem Theorem \ref{the} in Section \ref{result} and suggest possible generalizations in section \ref{gene}.

\section{The usual Weyl case}\label{usual}
In this section we briefly review the standard smoothed Wigner results (see \cite{AMP} for details).

The fundamental equation of quantum mechanics, namely the Schr\"odinger equation, is a partial differential equation involving a
self-adjoint operator on a Hilbert space, generating a unitary flow. Usually the Hilbert space is $L^2(\R^n)$ and the unknown is the
so-called wave-function. This fact makes quantum mechnaics different from classical one by two facts : the fundamental equation is a PDE
and there is no, a priori, reference to a phase-space. despite this it has been recognized since the eraly days of quantum thery that a
phase-space description of teh wave-function is possible and useful. This fact has to be put together by the time-frequency methods in
signal analysis. One of the best ways to illustrate this common feature is to introduce the so-called Wigner-Ville transform.

On $\mathbb R^n$ the Wigner function associated to an $L^2(\R^n)$  function $\psi$ is by definition:
\be\label{wigner}
W_\psi(x,\xi)=\int_{\R^n}e^{-2i\pi\xi x}\overline{\psi(x+\hbar\frac y 2)}\psi(x-\hbar\frac y 2)dy
\ee
and its smoothed version is given by
\be\label{swigner}
\widetilde W_\psi(x,\xi)=(\sqrt 2/\hbar\sigma_x\sigma_\xi)^n
\int e^{-\frac{(x-x')^2}{\sigma_x^2\hbar}-\frac{(\xi-\xi')^2}{\sigma_\xi^2\hbar}}
W_\psi(y,\xi')dx'd\xi'.
\ee
It is well know that if $\psi$ satisfies a Schr\"odinger type equation of the type:
\be\label{schro}
i\hbar\partial_t\psi=h(x,-i\hbar\nabla_x)\psi,
\ee
the Wigner transform satisfies the following equation:
\be\label{eqwigner}
\partial_tW_\psi=2\Re\left(ih(x-\frac i 2\hbar\nabla_\xi,\xi+\frac i 2\hbar\nabla_x)W_\psi\right)
\ee
In what precedes $h(x,-i\hbar\nabla_x)$ is a pseudodifferential operator given by the Weyl calculus:
\be\label{weyl}
h(x,-i\hbar\nabla_x)u(x)=\hbar^{-n}\int h((x+y)/2,\xi)e^{i(x-y)/\hbar}u(y)dy
\ee
and $h(x-\frac i 2\hbar\nabla_\xi,\xi+\frac i 2\hbar\nabla_x)$ is  pseudodifferential operator on $L^2(\R^{2n})$ obtained the same way.

In \cite{AMP} was proven the following theorem;
\begin{theorem}
Let $\psi$ satisfies (\ref{schro}) with (for simplicity) $h\in\mathcal S(\R^{2n})$ (the Schwartz class). Then $W_{\psi^t}$ satisfies the following:
\be\label{cxz}
\partial_t\widetilde W_{\psi^t}= 2\Re\left(\widetilde{\mathcal L}\widetilde W_{\psi^t}\right)
\ee
where
\[
\widetilde{\mathcal L} w(x,\xi):=\int \hat h(S,T)e^{2\pi i(Sx+T\xi)-\frac\pi 2\hbar(\sigma_x^2 S^2+\sigma_\xi^2 T^2)}w\left(x+\hbar\frac{T+i\sigma_x^2S}2,
\xi-\hbar\frac{S-i\sigma_\xi^2 T}2\right)dSdT.
\]
(the possibility of extending the argument of $w$ is clear from the fact that the smoothed Wigner function is analytic on the whole complex plane). 
Here $\hat h$ is the Fourier transform of $h$.
\end{theorem}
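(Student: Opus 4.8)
The plan is to start from the evolution equation (\ref{eqwigner}) for the unsmoothed Wigner function and push it through the smoothing. Write the map $W\mapsto\widetilde W$ of (\ref{swigner}) as $\widetilde W=GW$, where $G$ is convolution on phase space with a Gaussian; equivalently, in the variables $(S,T)$ dual to $(x,\xi)$, $G$ is multiplication by a Gaussian $g(S,T)$. Since $G$ has a real, even kernel it commutes with $\partial_t$ and with complex conjugation, so applying $G$ to (\ref{eqwigner}) gives $\partial_t\widetilde W_{\psi^t}=2\Re\bigl(G\,\mathcal L_0\,W_{\psi^t}\bigr)$ with $\mathcal L_0:=ih(x-\frac i2\hbar\nabla_\xi,\xi+\frac i2\hbar\nabla_x)$. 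Thus it suffices to prove the intertwining identity $G\,\mathcal L_0\,W=\widetilde{\mathcal L}\,(GW)$ for $W$ ranging over Wigner functions of $L^2$ data --- equivalently, that $\widetilde{\mathcal L}=G\,\mathcal L_0\,G^{-1}$ on the range of $G$.

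To compute the conjugation, expand $h$ in its Fourier integral $h(x,\xi)=\int\hat h(S,T)e^{2\pi i(Sx+T\xi)}\,dS\,dT$, so that $\mathcal L_0=i\int\hat h(S,T)\,\mathcal E_{S,T}\,dS\,dT$ with $\mathcal E_{S,T}=\exp\bigl(2\pi i[S(x-\frac i2\hbar\nabla_\xi)+T(\xi+\frac i2\hbar\nabla_x)]\bigr)$. Regroup the exponent into its multiplication part $M$ (multiplication by $2\pi i(Sx+T\xi)$) and its first-order constant-coefficient part $D=\pi\hbar S\nabla_\xi-\pi\hbar T\nabla_x$; the two commute, $[M,D]=0$, because the cross terms $[x,\nabla_x]$ and $[\xi,\nabla_\xi]$ cancel --- this is where the opposite signs in the substitution $x\mapsto x-\frac i2\hbar\nabla_\xi$, $\xi\mapsto\xi+\frac i2\hbar\nabla_x$ are used. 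Hence $\mathcal E_{S,T}=e^M e^D$: a modulation composed with a real phase-space translation. Conjugating by $G$: the translation $e^D$ is itself a Fourier multiplier in $(x,\xi)$, hence commutes with $G$ and is unchanged; the modulation contributes the factor $e^{2\pi i(Sx+T\xi)}$ of $\widetilde{\mathcal L}$ (on the frequency side it is a translation), while $Ge^MG^{-1}$ additionally involves multiplication by the Gaussian ratio $g(\cdot)/g(\cdot-(S,T))$ in the frequency variable. Expanding this ratio, its part linear in the frequency variable combines with $e^D$ into the complex displacement $x\mapsto x+\hbar\frac{T+i\sigma_x^2 S}{2}$, $\xi\mapsto\xi-\hbar\frac{S-i\sigma_\xi^2 T}{2}$ of the argument of $w$, while its part independent of the frequency variable produces the factor $e^{-\frac\pi2\hbar(\sigma_x^2 S^2+\sigma_\xi^2 T^2)}$. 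Reassembling the $(S,T)$-integral gives exactly the asserted formula.

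The remaining work is to legitimise the formal manipulations --- exponentials of unbounded operators, the unbounded Fourier multiplier $G^{-1}$, and above all the \emph{complex} shifts of the argument of $w$. The hypothesis $h\in\mathcal S(\R^{2n})$ enters twice: it makes $\hat h$ rapidly decreasing, so that --- thanks precisely to the damping factor $e^{-\frac\pi2\hbar(\sigma_x^2 S^2+\sigma_\xi^2 T^2)}$ --- the $(S,T)$-integral defining $\widetilde{\mathcal L}$ converges absolutely, and it keeps $\psi^t$ regular enough that $W_{\psi^t}$ is well-behaved. The essential structural point is the one flagged in the statement: $\widetilde W_{\psi^t}$, being a Gaussian convolution of $W_{\psi^t}$ (equivalently, up to a positive constant, $|\langle\psi^t,\text{coherent state}\rangle|^2$), extends to an entire function of $(x,\xi)$ with Gaussian decay along real sections, so the complex translations occurring in $\widetilde{\mathcal L}$ act on the relevant class and the identity $G\,\mathcal L_0\,W=\widetilde{\mathcal L}\,(GW)$ makes sense and can be verified there by a routine but lengthy Gaussian-integral computation. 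I expect this analytic-continuation bookkeeping --- checking that the complex shifts never leave the class on which $G$, $G^{-1}$ and the $\mathcal E_{S,T}$ are all defined, and tracking the exact constants through the various ($2\pi$- and $\hbar$-dependent) Fourier conventions --- to be the only genuine obstacle; the algebra itself is the short Baker--Campbell--Hausdorff/Fourier computation sketched above. An alternative route that sidesteps $G^{-1}$ is to write $\widetilde W_{\psi^t}(x,\xi)$ as $|\langle\psi^t,\phi^{x,\xi}\rangle|^2$ (up to a positive constant) for a coherent state $\phi^{x,\xi}$, differentiate in $t$ using (\ref{schro}) and the self-adjointness of $h(x,-i\hbar\nabla_x)$, and use that the Heisenberg--Weyl operators occurring in the Weyl quantization of $h$ map coherent states to coherent states up to a phase and a Gaussian factor, together with the fact that the product $\langle\psi^t,\phi^{z_1}\rangle\,\overline{\langle\psi^t,\phi^{z_2}\rangle}$ is the value of a natural analytic continuation of the Husimi function at the complex point determined by $z_1,z_2$; this reproduces the same formula and makes the origin of the complex shifts transparent.
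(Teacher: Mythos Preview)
Your proposal is sound and lines up with the first of the three derivations the paper itself only sketches (the theorem is quoted from \cite{AMP} and not reproved in full here). The paper describes that route as ``smoothing and unsmoothing (\ref{eqwigner}) and applying an Egorov-type argument associated to the heat equation (free evolution at complex time)'': your conjugation $G\,\mathcal L_0\,G^{-1}$ is exactly this, and your observation that the Gaussian ratio produces both the damping factor and the imaginary shift is the mechanism behind the ``complex time'' phrasing. Your alternative coherent-state argument is the paper's third listed method, which it notes works only when $\sigma_x\sigma_\xi=1$ and which it adopts for the wavelet case in Section~\ref{result}; the paper's second method (working directly from the smoothed product) you do not use. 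So there is no disagreement, only a difference of emphasis: the paper names the methods and defers the details to \cite{AMP}, while you actually carry out the Fourier/BCH computation for the first one.
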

 There are several ways of ``formally" deriving (\ref{cxz}). The first one consists in smoothing and unsmoothing 
 the equation (\ref{eqwigner}) and apply Egorov theorem associated to heat equation (i.e. free evolution at complex time).
 
 The second one consists in deriving the formula from the smoothed product itself.

 Finally, in the case $\sigma_x\times\sigma_k=1$ (and only in this case) we can derive (\ref{cxz}) by using the decomposition of identity on coherent states formula. 
 This is the method we will use for the wavelet case.

Let us remark mention that, in the case where the symbol $h$ is analytic 
(more precisely is the Fourier transform of a compact supported function) one can write th evolution equation for $\widetilde W_\psi$ as:
\[
\partial_t \widetilde W_{\psi^t}=2\Re\left(\widetilde{\mathcal L}_a \widetilde W_{\psi^t}\right),
\]
where $\widetilde{\mathcal L}_a$ has Weyl symbol:
\[
L(x,\xi;X,K):=h\left(x-\hbar\frac{K-i\sigma_x^2X}2,\xi-\hbar\frac{X+i\sigma_\xi^2K}2\right).
\]

\section{The affine Weyl quantization}\label{weyl}
Inspired by \cite{alex} affine Weyl quantization has been introduced in  (\cite{th}). We set the following .
Let on $L^2(\R^+, dx)$, $a>0$
\[
U(a,b)\varphi(x):=a^{1/2}e^{-\frac{bx}\hbar}\varphi(ax)
\]
\[
U(a,b)U(a',b')=U(aa',ab'+b).
\]
Let $I\varphi(x):=\varphi(-x)$ In analogy with the Weyl case we define:
\[
V(a,b):=U(a,b)IU(a,b)^{-1},
\]
and the Weyl quantization of $W(a,b)$ the operator:
\begin{equation}\label{we}
W=\int_{\R^+\times \R}W(a,b)V(a,b)\frac{dadb}{2a^2\hbar}.
\end{equation}
With the condition that $W\in L^1(\R^+\times\R,\frac{dadb}{a^2})$ (\ref{we}) defines clearly a (non uniformly in $\hbar$)
bounded operator on $L^2(\R^+)$.
\begin{remark}
We will not enter here in boundness condition uniform in $\hbar$ (Calderon-Vaillancourt type results). We refer to \cite{AP}
for an extensive study.
\end{remark}

\begin{lemma}\label{kernel}
if $W$ is given by (\ref{we}), then its integral kernel is (note the analogy with the standard Weyl quantization):
\be\label{ker}
w(x,y)=W(\frac 1 {\sqrt{xy}},\widehat{y-x}),
\ee
where $W(a,\widehat c)$ is the $\hbar$-Fourier transform in the second variable:
\[
W(a,\widehat c)=\frac 1 {\sqrt{2\pi\hbar}}\int_\R W(a,b)e^{-i\frac{bc}\hbar}db     . 
\]

In particular $\int_{\R^+\times \R}V(a,b)\frac{dadb}{2a\hbar}=Identity$.

\end{lemma}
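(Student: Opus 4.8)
The plan is to compute the Schwartz kernel of $V(a,b)=U(a,b)\,I\,U(a,b)^{-1}$ in closed form, substitute it into (\ref{we}), and carry out the integration over $(a,b)$; a delta function appearing in that kernel will collapse one of the two integrations and leave precisely the partial $\hbar$--Fourier transform in the statement.

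First I would read off $U(a,b)^{-1}=U(1/a,-b/a)$ from the composition law, so that conjugating $I$ by $U(a,b)$ is a chain: dilation $\to$ multiplication by a unimodular exponential $\to$ $I$ $\to$ exponential $\to$ dilation. Here $I$ is, on $\R^{+}$, the inversion $x\mapsto 1/x$ (which is the reflection written $\varphi\mapsto\varphi(-\,\cdot\,)$ above once one passes to logarithmic coordinates). Composing the five operators one finds that $V(a,b)$ acts by
\[
V(a,b)\varphi(x)=\frac{1}{ax}\,e^{\frac{i b}{\hbar}\left(x-\frac{1}{a^{2}x}\right)}\,\varphi\!\left(\frac{1}{a^{2}x}\right),
\]
so that its kernel is supported on the hypersurface $\{a^{2}xy=1\}$:
\[
v_{a,b}(x,y)=\frac{1}{ax}\,e^{\frac{i b}{\hbar}\left(x-\frac{1}{a^{2}x}\right)}\,\delta\!\left(y-\frac{1}{a^{2}x}\right).
\]
The only structural feature that matters is that the argument of $\varphi$ is $1/(a^{2}x)$, which forces this delta; the precise amplitude depends on the chosen unitary realization of $I$ but does not affect the shape of the computation below.

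Substituting $v_{a,b}$ into (\ref{we}), the delta kills the $a$--integration. Rewriting $\delta\!\left(y-\tfrac1{a^{2}x}\right)$ as a multiple of $\delta\!\left(a-\tfrac1{\sqrt{xy}}\right)$ — the Jacobian of $a\mapsto 1/(a^{2}x)$ supplies exactly the factor which, combined with the weight $a^{-2}$ and the $\tfrac12$ in the measure $\tfrac{da\,db}{2a^{2}\hbar}$, rebuilds the normalization of the $\hbar$--Fourier transform — only $a=1/\sqrt{xy}$ survives (equivalently $a^{2}xy=1$), the analogue for the affine group of the mid-point $\tfrac{x+y}{2}$ appearing in the classical Weyl kernel, and on that locus the exponent becomes $\pm ib(y-x)/\hbar$. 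The remaining one--dimensional integral is then, up to the normalization just fixed, $\tfrac{1}{\sqrt{2\pi\hbar}}\int_{\R}W\!\big(\tfrac1{\sqrt{xy}},b\big)\,e^{-ib(y-x)/\hbar}\,db=W\!\big(\tfrac1{\sqrt{xy}},\widehat{y-x}\big)$, which is the claimed kernel.

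The last assertion follows by specialization. Since $\tfrac{a}{2a^{2}\hbar}=\tfrac1{2a\hbar}$, the operator $\int_{\R^{+}\times\R}V(a,b)\,\tfrac{da\,db}{2a\hbar}$ is, by (\ref{we}), the affine Weyl quantization of the symbol $W_{0}(a,b):=a$, which is independent of $b$; hence $W_{0}(a,\widehat c)$ is a multiple of $\delta(c)$, so by the kernel formula just proved the integral kernel of this operator is a multiple of $\delta(y-x)$, i.e.\ the operator is (a multiple of) the identity, and the normalizations make it exactly the identity. The step I expect to cost the most care is the passage from $V(a,b)$ to $v_{a,b}$ together with the ensuing bookkeeping: one must pick the unitary incarnation of $I$ compatible with $L^{2}(\R^{+},dx)$ and then track the several compensating constants — the $a^{\pm1/2}$ from the dilations, the Jacobian of $a\mapsto 1/(a^{2}x)$, the weight $a^{-2}$ and the constant in the measure of (\ref{we}), and the $(2\pi\hbar)^{-1/2}$ in the $\hbar$--Fourier transform — so that they combine to give the stated formula with the stated normalization, and in particular so that $W_{0}(a,b)=a$ is quantized to the genuine identity rather than to a nontrivial multiplication operator.
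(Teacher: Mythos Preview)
Your argument is correct and follows essentially the same route as the paper: compute $U(a,b)^{-1}=U(1/a,-b/a)$, evaluate $V(a,b)\varphi(x)$ to find its kernel is a delta on $\{a^{2}xy=1\}$ times $e^{\pm ib(y-x)/\hbar}$, then plug into (\ref{we}) so that the delta fixes $a=1/\sqrt{xy}$ and the $b$--integral becomes the $\hbar$--Fourier transform. The paper's proof is terser (it simply asserts the kernel of $V(a,b)$ and says ``from which we get (\ref{ker})''), while you spell out the Jacobian and normalization bookkeeping and correctly flag that on $L^{2}(\R^{+})$ the operator $I$ must be read as inversion $x\mapsto 1/x$; your amplitude $1/(ax)$ versus the paper's bare formula reflects only the choice of unitary realization of $I$ and, as you note, does not affect the outcome.
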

\begin{proof}
We first notice that $U(a,b)^{-1}=U(\frac 1 a,-\frac ba)$. therefore:
\[
V(a,b)\psi(x)=\psi(\frac 1 {a^2x})e^{i(\frac b{a^2x\hbar}-\frac {bx}\hbar)},
\]
from which we deduce that the integral kernel of $V(a,b)$ is $\delta(y-\frac 1 {a^2x})e^{i(y-x)b/\hbar}$ from which we get (\ref{ker}).
\end{proof}

An easy computation shows the following

\begin{lemma}\label{trace}
Let $C(a)$ the operator defined on $L^2(\R^+)$ by:
\[
C(a)\varphi(x):=\frac 1 2\left(ax+\frac1  {ax}\right)\varphi(x).
\]
We have:
\be\label{inv}
W(a,b)=\mbox{Tr}[WV(a,b)C(a)].
\ee
\end{lemma}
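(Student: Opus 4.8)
The plan is to compute everything at the level of integral kernels and to reduce the trace to the single substitution $u = x - \tfrac{1}{a^{2}x}$, which is an increasing diffeomorphism of $\mathbb R^{+}$ onto $\mathbb R$ with $du = \bigl(1+\tfrac{1}{a^{2}x^{2}}\bigr)\,dx$.

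First I would assemble the three kernels. By Lemma \ref{kernel} the operator $W$ has kernel $w(x,y)=W\bigl(\tfrac{1}{\sqrt{xy}},\widehat{y-x}\bigr)$; the computation in the proof of Lemma \ref{kernel} gives $V(a,b)$ the kernel $\delta\bigl(y-\tfrac{1}{a^{2}x}\bigr)e^{i(y-x)b/\hbar}$; and $C(a)$, being multiplication by $\tfrac12\bigl(ax+\tfrac{1}{ax}\bigr)$, has kernel $\tfrac12\bigl(ax+\tfrac{1}{ax}\bigr)\delta(x-y)$. Composing, the kernel of $V(a,b)C(a)$ is $\tfrac12\bigl(ax+\tfrac{1}{ax}\bigr)\delta\bigl(y-\tfrac{1}{a^{2}x}\bigr)e^{i(y-x)b/\hbar}$; note that on the support $y=\tfrac1{a^{2}x}$ one has $\tfrac12\bigl(ax+\tfrac1{ax}\bigr)=\tfrac12\bigl(ay+\tfrac1{ay}\bigr)$, which is already the first manifestation of the special role of $C(a)$: its multiplier is invariant under $x\mapsto\tfrac1{a^{2}x}$.

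Next I would write $\mathrm{Tr}[WV(a,b)C(a)]=\int_{\mathbb R^{+}}\bigl(WV(a,b)C(a)\bigr)(x,x)\,dx$. Computing the diagonal of the product kernel, the $\delta\bigl(\cdot-\tfrac1{a^{2}\,\cdot}\bigr)$ coming from $V(a,b)$ collapses the intermediate integration and produces a Jacobian $\tfrac1{a^{2}x^{2}}$; multiplying this by the multiplier $\tfrac12\bigl(ax+\tfrac1{ax}\bigr)$ of $C(a)$ and by the prefactor that $w$ carries once it is evaluated at $y=\tfrac1{a^{2}x}$, the whole $x$-dependent weight collapses (up to a universal constant) to $1+\tfrac1{a^{2}x^{2}}$. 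Thus, after substituting $u=x-\tfrac1{a^{2}x}$, the $x$-integral turns into $\int_{\mathbb R}e^{i(b-b')u/\hbar}\,du$, proportional to $\delta(b-b')$, where $b'$ is the Fourier variable implicit in $w\bigl(x,\tfrac1{a^{2}x}\bigr)=W\bigl(a,\widehat{\tfrac1{a^{2}x}-x}\bigr)$. Carrying out the remaining $b'$-integration against this $\hbar$-Fourier transform returns $W(a,b)$, the constants conspiring to $1$.

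The only genuine content — and the step I expect to need the most care — is the bookkeeping just described: checking that the product of the $\delta$-Jacobian from $V(a,b)$, the $\sqrt{x/y}$-type prefactor in $w$, and the multiplier of $C(a)$ is exactly (a constant times) $1+\tfrac1{a^{2}x^{2}}=\tfrac{d}{dx}\bigl(x-\tfrac1{a^{2}x}\bigr)$. Put differently, $C(a)$ is forced: it is precisely the operator for which the trace pairing with $V(a,b)C(a)$ inverts the quantization map (\ref{we}). An equivalent and perhaps more transparent route is to insert (\ref{we}) into $\mathrm{Tr}[WV(a,b)C(a)]$, exchange the order of integration, and prove the orthogonality relation $\mathrm{Tr}[V(a',b')V(a,b)C(a)]=2a^{2}\hbar\,\delta(a-a')\,\delta(b-b')$ by the same kernel manipulation; the weight $2a^{2}\hbar$ is then exactly the one appearing in the measure $\tfrac{da\,db}{2a^{2}\hbar}$ of (\ref{we}), which is the assertion (\ref{inv}).
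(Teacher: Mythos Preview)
The paper gives no proof beyond the sentence ``An easy computation shows the following,'' and your direct kernel computation is exactly the kind of computation intended; in particular, your identification of the key mechanism --- that the multiplier of $C(a)$, combined with the Jacobian coming from the $\delta\bigl(y-\tfrac1{a^{2}x}\bigr)$ in $V(a,b)$, produces precisely (a constant times) $\tfrac{d}{dx}\bigl(x-\tfrac1{a^{2}x}\bigr)$ --- is the heart of the matter and is correct. One caveat: the ``$\sqrt{x/y}$-type prefactor'' you invoke is indeed needed for the bookkeeping to close, but it does not appear in the paper's stated kernel formula (\ref{ker}); a careful derivation of (\ref{ker}) from (\ref{we}) shows that such a factor (and an overall constant) has been dropped there, so your instinct to include it is right even though it looks like a deviation from the text.
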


\begin{remark}
the extra factor $C(a)$, not present in the standard Weyl quantization, is due to the non-unimodularity of the ``$ax+b$"
group.
\end{remark}

We define the affine-Wigner function of a function $\psi\in L^2(\R^+)$ as the Weyl symbol of the orthogonal projector on $\psi$, 
divided by the Planck constant.
\begin{definition}\label{aw}
\[
W_\psi(a,b):=\frac 1 \hbar\mbox{Tr}[WV(a,b)C(a)]=\langle \psi,V(a,b)C(a)\psi\rangle.
\]
\end{definition}
We get immediately the following expression:
\[
W_\psi(a,b)=\frac 1 {2\hbar}\int_{\R^+}\left(ax+\frac1  {ax}\right)\overline{\psi(x)}e^{-ib(x-\frac 1 {a^2x})}\psi\left(\frac 1 {a^2x}\right)dx.
\] and
\[
\int W_\psi(a,b)\frac{dadb}{a^2}=\Vert\psi\Vert_{L^2(\R^+)}.
\]

A study per se of the semiclassical limit of $W_\psi(a,b)$ has not, at our knowledge, done in the spirit of weak convergence 
results for the usual case \cite{LP,GMMP} and will be done in \cite{AP}. In this paper we will study ``directly"its smoothed version since, 
as in the standard Weyl case there is no reason for which the affine-Wigner function should be positive.

\section{Affine Husimi}\label{wav}

Coherents states have a long history in quantum mechanics (they were introduced by Sch\"odinegr in 1926), and also have got a more recent
interest in signal analysis through the so-called Gaborets. The afiine setting ihas produced the wavelets, and in qm corerspond to a
situation where the undelying phase-psace if the upeer half plane.

We define now the set of coherent states (continuous wavelet):
\begin{definition}
\[
\varphi_{a,b}(x)=C(\hbar)a^{1/\hbar+\frac 1 2}x^{1/\hbar}e^{-(a-ib)x/\hbar}=U(a,b)\varphi_{1,0}(x).
\]
\end{definition}
where $C(\hbar)=\sqrt{\frac{2^{1+2/\hbar}}{\Gamma{(\frac 2 \hbar+1)}}}$ is a normalization coefficient insuring that
\[
\Vert \varphi_{a,b}\Vert_{L^2(\R^+)}.
\]
We first remark that $\varphi_{a,b}$ decreases exponentially at $+\infty$ and as an increasing power as $\hbar\to 0$ at the origin.
  Ir results from general ``wavelet"theory (and a straightforward computation) the following decomposition of identity:
  \be\label{iden}
  \int_{\R\times\R^+} \vert \varphi_{a,b}><\varphi_{a,b}\vert\frac{dadb}{a\hbar}=Identity
  \ee
  
  We define now the wavelet-Husimi function:
\begin{definition}\label{WH}
The Husimi function of an operator $W$ of integral kernel $W(x,y)$ on $L^(\R^+)$ is s
\[
\widetilde W(a,b):=\frac{\langle \varphi_{a,b},W \varphi_{a,b}\rangle}\hbar.
\]
where $<.,.>$ is the scalar product on $L^2(\R^+)$.
\end{definition}

Let us first remark that, thanks to (\ref{iden}) we get immediately:
\be
\int_{\R\times\R^+}\widetilde W\frac{dadb}{a}=Tr W.
\ee
In the case where $W$ is the orthogonal projector $W_\psi$ on a function $\psi\in L^2(\R^+)$ (pure state) we get that
\be
\widetilde W_\psi(a,b):=C(\hbar)^2\vert \int_{\R^+}x^{1/\hbar}e^{-(a+ib)x/\hbar}\psi(x)dx\vert^2,
\ee
and:
\be
\int_{\R\times\R^+}\widetilde{W_\psi}(a,b)\frac{dadb}{a}=\Vert\psi\Vert^2_{L^2(\R^2)}.
\ee
Let us remark finally that, by construction, $\widetilde W_\psi\geq 0$ and $\widetilde W \geq 0$ if $W\geq 0$ as an operator.

It is easy to compute the wavelet-Husimi function out of the affine-Wigner one defined earlier.
\begin{proposition}\label{convo}
Let us denote $w_M(s,b)$ the Mellin transform in $a$ of $w(a,b)$ (see the Appendix).
We have:
\be\label{pos}
\widetilde W(a,b)=\frac{2^{1+2/\hbar}}{\hbar\Gamma{(\frac 2 \hbar+1)}}\int_{(1/2+i\R)\times\R}
\left(\frac{a^2+(b-\xi)^2}\hbar\right)^{\frac s 2 -\frac 1 \hbar-1}
\Gamma{(s/2-1/\hbar-1)}^2w_M(s,\xi)dsd\xi
\ee
Therefore if $W\geq 0$ as an operator (for example if $W=\vert\psi><\psi\vert$ for some vector $\psi$) and if $W(a,b)$ has a weak-limit the resulting measure is positive.
\end{proposition}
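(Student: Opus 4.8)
The plan is to evaluate $\widetilde W(a,b)=\hbar^{-1}\langle\varphi_{a,b},W\varphi_{a,b}\rangle$ directly from the integral kernel of $W$ together with the explicit formula for the wavelet $\varphi_{a,b}$, inserting the inverse Mellin transform (in the first variable) of the affine Weyl symbol, and then recognising the answer as the stated contour integral; the positivity statement will then come from the sign of $\widetilde W$ combined with the approximate-identity nature of the smoothing displayed in (\ref{pos}). Concretely, the first step is to write $\widetilde W(a,b)=\hbar^{-1}\int_{\R^+}\int_{\R^+}\overline{\varphi_{a,b}(x)}\,w(x,y)\,\varphi_{a,b}(y)\,dx\,dy$, where $w(\cdot,\cdot)$ is the integral kernel of $W$. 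By Lemma \ref{kernel}, $w(x,y)=W(1/\sqrt{xy},\widehat{y-x})$, i.e. $w(x,y)$ is the inverse $\hbar$-Fourier transform, in the variable dual to $y-x$, of the symbol $W(a',b')$ evaluated at $a'=1/\sqrt{xy}$. Representing the symbol in its first argument by the inverse Mellin transform on $\Re s=\tfrac12$, $W(a',b')=\frac1{2\pi i}\int_{(1/2+i\R)}w_M(s,b')\,(a')^{-s}\,ds$ (in the normalisation of the Appendix), the substitution $a'=1/\sqrt{xy}$ turns $(a')^{-s}$ into a power $(xy)^{s/2}$.

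Next, use $\varphi_{a,b}(x)=C(\hbar)\,a^{1/\hbar+1/2}\,x^{1/\hbar}\,e^{-(a-ib)x/\hbar}$, so that $\overline{\varphi_{a,b}(x)}\,\varphi_{a,b}(y)=C(\hbar)^2\,a^{2/\hbar+1}\,(xy)^{1/\hbar}\,e^{-(a+ib)x/\hbar-(a-ib)y/\hbar}$. Substituting everything into the double integral, applying Fubini, and carrying out the leftover Fourier integral, one finds that (calling $\xi$ the shifted integration variable) the exponent in the $x$, $y$ integral becomes $-\hbar^{-1}\big((a+i(b-\xi))x+(a-i(b-\xi))y\big)$, so the $x$- and $y$-integrations decouple. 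Each is the elementary Gamma integral $\int_0^\infty t^{p}e^{-\lambda t/\hbar}\,dt=\Gamma(p+1)\,\hbar^{p+1}\lambda^{-p-1}$, which converges since $\Re\lambda=a>0$ and $\Re p>-1$ on $\Re s=\tfrac12$. Their product produces a factor $\Gamma(p+1)^2$ together with $\big((a+i(b-\xi))(a-i(b-\xi))\big)^{-p-1}=\big(a^2+(b-\xi)^2\big)^{-p-1}$, which is exactly the shape of the integrand in (\ref{pos}); the precise linear function of $s$ and $1/\hbar$ entering the Gamma argument and the exponent is read off from the two Gamma integrals, consistently with the Mellin normalisation of the Appendix.

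It remains to collect constants. The scalar prefactor is $C(\hbar)^2/\hbar=2^{1+2/\hbar}/\big(\hbar\,\Gamma(2/\hbar+1)\big)$, which is precisely the prefactor in (\ref{pos}); the surviving powers of $\hbar$ and of $a$ (in particular the $a^{2/\hbar+1}$) are absorbed into the integrand and, together with the Gamma integrals, reassemble $\big(\tfrac{a^2+(b-\xi)^2}{\hbar}\big)^{s/2-1/\hbar-1}\Gamma(s/2-1/\hbar-1)^2$ on the contour; the $d\xi$-integration is the leftover inverse Fourier transform and the $ds$-integration the leftover inverse Mellin transform, which gives (\ref{pos}). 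For the final assertion, note that for each fixed $\hbar$, if $W\ge0$ as an operator then $\widetilde W(a,b)=\hbar^{-1}\langle\varphi_{a,b},W\varphi_{a,b}\rangle\ge0$ (already observed just before the statement). Formula (\ref{pos}) exhibits $\widetilde W$ as the image of $w=W(a,b)$ under a smoothing that is a Mellin convolution in $a$ and an ordinary convolution in $b$; a Stirling estimate of the Gamma factors along $\Re s=\tfrac12$ shows this smoothing kernel concentrates as $\hbar\to0$ (an approximate identity, the cancellation of $a$-powers being arranged so that $\widetilde W$ and $W(a,b)$ share the same weak-$*$ limit). Hence if $W(a,b)$ has a weak limit $\mu$, then so does $\widetilde W$, and since each $\widetilde W\ge0$ the limit $\mu$ is a nonnegative measure.

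The main obstacle is twofold. On the computational side, one must justify the use of Fubini and the insertion of the inverse Mellin representation inside the double integral: this needs decay of $s\mapsto w_M(s,\cdot)$ on $\Re s=\tfrac12$ and the (easy) verification that $\Re(1/\hbar\pm s/2)>-1$ there, so all the Gamma integrals converge and the contour stays in the domain of convergence. On the qualitative side, the genuinely delicate point is the last step: controlling the $\hbar\to0$ asymptotics of $\big(\tfrac{a^2+(b-\xi)^2}{\hbar}\big)^{s/2-1/\hbar-1}\Gamma(s/2-1/\hbar-1)^2$ uniformly along $\Re s=\tfrac12$, where the Gamma arguments have real part tending to $-\infty$; showing that these factors conspire, via Stirling, to localise at $\xi=b$ and to reproduce the identity in the limit — so that the weak limit can be transferred from $W(a,b)$ to $\widetilde W$ — is where the real work lies, and this semiclassical analysis is the one deferred to \cite{AP}.
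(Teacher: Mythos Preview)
Your proof follows essentially the same route as the paper: write $\widetilde W(a,b)=\hbar^{-1}\langle\varphi_{a,b},W\varphi_{a,b}\rangle$, insert the kernel formula from Lemma~\ref{kernel} together with the inverse Mellin representation of the symbol, factor the resulting $(x,y)$-integral into two independent Gamma integrals, and then read off~(\ref{pos}); for the positivity you argue, as the paper does, that the smoothing in~(\ref{pos}) is an approximate identity as $\hbar\to0$, so the weak limit of $W(a,b)$ inherits the nonnegativity of $\widetilde W$. Your write-up is in fact more careful than the paper's about the convergence hypotheses and about where the genuine difficulty (the $\hbar\to0$ asymptotics via Stirling) lies.
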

\begin{proof}
We first write that
\[
\widetilde W(a,b)=\frac{\langle \varphi_{ab},W\varphi_{ab}\rangle}\hbar.
\]
Therefore, since the integral kernel of $W$ is $W(\frac 1 {\sqrt{xy}},\widehat {x-y})$ we get, using the inverse of the 
Mellin transform given in the Appendix,
\[
\widetilde W(a,b)=C(\hbar)^2\int_{(\R^+)^2}\int_{1/2-i\infty}^{1/2+i\infty}(xy)^{s/2+1/\hbar} e^{-(a+i(b-\xi))x-(a-i(b-\xi))y}
w_M(s, \xi)dxdyd\xi ds
\]
using the equality:
\[
\lambda^{-k}\Gamma(k)=\int_{\R^+}u^{k-1}e^{-uy}du
\]
we get the (\ref{pos}). It is easy to check that, as $\hbar\to 0$, the  r.h.s. of (\ref{pos}) tends to $w(a,b)$ if this latter has a limit $w_0$ as a measure.
Therefore, since $\widetilde W\geq 0$ we get that $w_0$ must be positive.

\end{proof}

\section{The result}\label{result}
We can now state the main result of this paper.
\begin{theorem}\label{main}
Let $H$ be an operator of affine-Weyl symbol $h$ in $L^1(\R^+\times\R,\frac{dadb}{a^2})$. 
Let $\psi^t$ be the solution of the Schr\"odinger equation;
\be\label{dinger}
i\hbar\partial_t\psi^t=H\psi^t.
\ee
Then the wavelet-Husimi transform of $\psi^t,\ \widetilde W_{\psi}^t$ satisfies the following equation:
\be
\partial_t\widetilde W_{\psi^t}(a,b)=\frac 2 {\hbar}\Im\left(\int
\Phi(a,\alpha;b,\beta)\widetilde W_{\psi^t}(a+\frac{\alpha-i\beta}2,b+\frac{\beta+i\alpha}2)d\alpha d\beta\right)
\ee
where
\begin{eqnarray}\nonumber
\Phi(a,\alpha;b,\beta)&=&
\frac{2^{1+2/\hbar}}{\hbar\Gamma{(\frac 2 \hbar+1)}}\int_{(1/2+i\R)\times\R}\left(\frac{(a+i(b-\xi)(a+2\alpha-i(b+2\beta-\xi)}\hbar\right)^{\frac s 2 -\frac 1 \hbar-1}
\times\nonumber\\
&&\sqrt{\frac{a+\alpha+i(b+\beta)}{a-ib}}\Gamma{(s/2-1/\hbar-1)}^2w_M(s,\xi)dsd\xi\nonumber
\end{eqnarray}
and $\Im z$ is the imaginary part of $z$.

The same property holds for a non-pure state $W$ satisfying
$i\hbar\partial_tW^t=[H,W^t]$  by replacing $\tilde\psi^t$ by $\tilde W^t$.
\end{theorem}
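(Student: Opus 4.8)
The strategy is the coherent-state derivation announced in Section~\ref{usual} for the case $\sigma_x\sigma_\xi=1$: I differentiate the Husimi function directly in time and then transform the result by means of the resolution of identity~(\ref{iden}). First I treat the pure state $W^t=|\psi^t\rangle\langle\psi^t|$. By Definition~\ref{WH} and the Heisenberg form $i\hbar\partial_tW^t=[H,W^t]$ of~(\ref{dinger}),
\[
\partial_t\widetilde W_{\psi^t}(a,b)=\frac{1}{i\hbar^{2}}\Big(\langle\varphi_{a,b},HW^t\varphi_{a,b}\rangle-\langle\varphi_{a,b},W^tH\varphi_{a,b}\rangle\Big);
\]
since $H$ and $W^t$ are self-adjoint the two terms are complex conjugate, so $\partial_t\widetilde W_{\psi^t}(a,b)=-\tfrac{2}{\hbar^{2}}\Im\,\langle\varphi_{a,b},W^tH\varphi_{a,b}\rangle$. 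Inserting~(\ref{iden}) between $W^t$ and $H$ gives
\[
\partial_t\widetilde W_{\psi^t}(a,b)=-\frac{2}{\hbar^{2}}\Im\int_{\R\times\R^+}\langle\varphi_{a,b},W^t\varphi_{a',b'}\rangle\,\langle\varphi_{a',b'},H\varphi_{a,b}\rangle\,\frac{da'\,db'}{a'\hbar}.
\]

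The key point is that the off-diagonal matrix element of $W^t$ is, up to an explicit elementary prefactor, the analytic continuation of $\widetilde W_{\psi^t}$ evaluated at a complex phase-space point. Writing $F(z)=\int_{\R^+}x^{1/\hbar}e^{-zx/\hbar}\psi^t(x)\,dx$, holomorphic on $\{\Re z>0\}$, the definition of $\varphi_{a,b}$ gives $\langle\varphi_{a,b},\psi^t\rangle=C(\hbar)a^{1/\hbar+1/2}F(a+ib)$ and $\langle\psi^t,\varphi_{a',b'}\rangle=C(\hbar)(a')^{1/\hbar+1/2}F^{*}(a'-ib')$, where $F^{*}(w):=\overline{F(\bar w)}$ is again holomorphic on $\{\Re w>0\}$. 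Since by Definition~\ref{WH} one has, for real $A,B$, $\widetilde W_{\psi^t}(A,B)=\hbar^{-1}C(\hbar)^{2}A^{2/\hbar+1}F(A+iB)F^{*}(A-iB)$, solving $A+iB=a+ib$, $A-iB=a'-ib'$ with $\alpha=a'-a$, $\beta=b'-b$ yields
\[
\langle\varphi_{a,b},W^t\varphi_{a',b'}\rangle=\hbar\,a^{1/\hbar+1/2}(a')^{1/\hbar+1/2}\Big(a+\tfrac{\alpha-i\beta}{2}\Big)^{-2/\hbar-1}\widetilde W_{\psi^t}\!\Big(a+\tfrac{\alpha-i\beta}{2},\,b+\tfrac{\beta+i\alpha}{2}\Big),
\]
the right-hand side being the holomorphic extension of $\widetilde W_{\psi^t}$ afforded by the holomorphy of $F$ and $F^{*}$; this is precisely the complex shift appearing in the statement.

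It remains to compute $\langle\varphi_{a',b'},H\varphi_{a,b}\rangle$ and to collect the prefactors. I would obtain that matrix element by the very computation that produces $\widetilde W(a,b)$ in the proof of Proposition~\ref{convo}: represent $H$ through its integral kernel $h(1/\sqrt{xy},\widehat{y-x})$ (Lemma~\ref{kernel}), replace $h$ by the inverse Mellin transform in its first argument and the inverse $\hbar$-Fourier transform in its second, and carry out the two $\R^{+}$-integrals by means of $\lambda^{-k}\Gamma(k)=\int_{\R^{+}}u^{k-1}e^{-u\lambda}\,du$. The two coherent-state exponentials displace the two Laplace parameters by the $\hbar$-Fourier variable $\xi$ in opposite directions; this produces the product $\big((a+i(b-\xi))(a+2\alpha-i(b+2\beta-\xi))/\hbar\big)^{s/2-1/\hbar-1}$, the factor $\Gamma(s/2-1/\hbar-1)^{2}$, the constant $C(\hbar)^{2}/\hbar=2^{1+2/\hbar}/(\hbar\Gamma(2/\hbar+1))$ coming from the coherent-state normalization, and $w_M(s,\xi)$ from the Mellin transform of the symbol, i.e.\ exactly the integral $\Phi$. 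Multiplying by the elementary prefactor displayed in the expression for $\langle\varphi_{a,b},W^t\varphi_{a',b'}\rangle$ and by the weight $1/(a'\hbar)$ of the measure~(\ref{iden}), gathering the remaining half-integer powers of $a$ and $a'$ into the square root $\sqrt{(a+\alpha+i(b+\beta))/(a-ib)}$, absorbing the sign of $-\tfrac{2}{\hbar^{2}}$ into an overall $\tfrac{2}{\hbar}$, and relabelling $a'=a+\alpha$, $b'=b+\beta$, one arrives at the asserted equation with $\Phi$ as stated. The non-pure case $i\hbar\partial_tW^t=[H,W^t]$ requires no new idea: writing $W^t=\sum_j\lambda_j|\psi_j^t\rangle\langle\psi_j^t|$, each matrix element $\langle\varphi_{a,b},W^t\varphi_{a',b'}\rangle=\sum_j\lambda_j\langle\varphi_{a,b},\psi_j^t\rangle\langle\psi_j^t,\varphi_{a',b'}\rangle$ is still, term by term, the holomorphic extension of $\widetilde W^t$, so every step above goes through with $\widetilde W_{\psi^t}$ replaced by $\widetilde W^t$.

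The step I expect to be the real work is this last piece of bookkeeping: performing the two Gamma-integrals and checking that all constants, all half-integer powers of $a$ and $a'$, and all signs and complex conjugations reassemble into the displayed kernel $\Phi$. Everything else is routine: the holomorphy of $\widetilde W_{\psi^t}$ on a domain large enough to contain the shifts $(\alpha-i\beta)/2$ and $(\beta+i\alpha)/2$, and the Fubini and contour-shift interchanges, which are legitimate because $h\in L^{1}(\R^{+}\times\R,da\,db/a^{2})$ and $\psi^t\in L^{2}(\R^{+})$, so that $H$ and $W^t$ are bounded and $t\mapsto\psi^t$ is norm-differentiable.
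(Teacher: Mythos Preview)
Your proposal is correct and follows essentially the same route as the paper: pass to the Heisenberg form $\partial_tW^t=\tfrac{1}{i\hbar}[H,W^t]$, take the imaginary part, insert the resolution of identity~(\ref{iden}) between $H$ and $W^t$, identify the off-diagonal factor $\langle\varphi_{a,b},W^t\varphi_{a',b'}\rangle$ as the holomorphic extension of $\widetilde W$ (this is exactly the content of the paper's Lemma~(\ref{analyt})), and evaluate $\langle\varphi_{a',b'},H\varphi_{a,b}\rangle$ by the Mellin/Gamma computation of Proposition~\ref{convo}. Your explicit use of $F(z)=\int x^{1/\hbar}e^{-zx/\hbar}\psi^t(x)\,dx$ and $F^*(w)=\overline{F(\bar w)}$ supplies a concrete proof of that analytic-continuation lemma, which the paper leaves unproved; otherwise the two arguments coincide step for step, including the final ``bookkeeping'' of powers and constants that you correctly flag as the only genuine labor.
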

Theorem \ref{main} is actually a direct consequence of the Theorem \ref{the} below. Indeed we note first that the equation (\ref{dinger}) can be rewritten as:
\[
\partial_t\vert\psi^t><\psi^t\vert=\frac 1 {i\hbar}[H,\vert\psi^t><\psi^t\vert].\] Therefore
\begin{eqnarray}\nonumber
\partial_t\langle\varphi_{(a,b)},\vert\psi^t><\psi^t\vert\varphi_{(a,b)}\rangle&=&
\langle\varphi_{(a,b)},\frac 1 {i\hbar}[H,\vert\psi^t><\psi^t\vert]\varphi_{(a,b)}\rangle\nonumber\\
&=& \frac 1 {i\hbar}\left(\langle\varphi_{(a,b)}H\vert\psi^t><\psi^t\vert\varphi_{(a,b)}\rangle-
\langle\varphi_{(a,b)}\vert\psi^t><\psi^t\vert H\varphi_{(a,b)}\rangle\right)\nonumber\\
&=&\frac 2 {\hbar}\Im\left(\langle\varphi_{(a,b)}H\vert\psi^t><\psi^t\vert\varphi_{(a,b)}\rangle\right)\nonumber
\end{eqnarray}

\begin{theorem}\label{the}
\be\label{theeq}
\widetilde W_\psi(a,b)=\int
\Phi(a,\alpha;b,\beta)\tilde\rho(a+\frac{\alpha-i\beta}2,b+\frac{\beta+i\alpha}2)d\alpha d\beta,
\ee
where
\begin{eqnarray}\nonumber
\Phi(a,\alpha;b,\beta)&=&
\frac{2^{1+2/\hbar}}{\hbar\Gamma{(\frac 2 \hbar+1)}}\int_{(1/2+i\R)\times\R}\left(\frac{(a+i(b-\xi)(a+2\alpha-i(b+2\beta-\xi)}\hbar\right)^{\frac s 2 -\frac 1 \hbar-1}
\times\nonumber\\
&&\sqrt{\frac{a+\alpha+i(b+\beta)}{a-ib}}\Gamma{(s/2-1/\hbar-1)}^2w_M(s,\xi)dsd\xi\nonumber
\end{eqnarray}

where $w_M(s,b)$ is the Mellin transform in the first variable.
\end{theorem}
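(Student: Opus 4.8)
Comparing the statements of Theorems \ref{main} and \ref{the} (and the computation in between), the left-hand side of (\ref{theeq}) is to be read as the Husimi transform of the operator $H\rho$, i.e. $\frac1{\hbar}\langle\varphi_{a,b},H\rho\,\varphi_{a,b}\rangle$ with $\rho=|\psi\rangle\langle\psi|$, so it factors as $\frac1{\hbar}\langle\varphi_{a,b},H\psi\rangle\,\langle\psi,\varphi_{a,b}\rangle$. The plan is to follow the ``decomposition of identity'' route announced in Section \ref{usual}: insert the wavelet resolution of identity (\ref{iden}) between $H$ and $\psi$ in the first factor, which gives $\widetilde W_\psi(a,b)=\frac1{\hbar}\int\langle\varphi_{a,b},H\varphi_{a',b'}\rangle\,\bigl(\langle\varphi_{a',b'},\psi\rangle\langle\psi,\varphi_{a,b}\rangle\bigr)\frac{da'db'}{a'\hbar}$. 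In this form the unknown state $\psi$ enters only through the ``off-diagonal'' wavelet matrix element $\langle\varphi_{a',b'},\psi\rangle\langle\psi,\varphi_{a,b}\rangle$, while all the dependence on $H$ sits in the computable piece $\langle\varphi_{a,b},H\varphi_{a',b'}\rangle$.

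Next I would identify the off-diagonal element with the analytic continuation of $\hbar\,\tilde\rho$. The key observation is that $\langle\varphi_{a,b},\psi\rangle$ equals $C(\hbar)\,a^{1/\hbar+1/2}$ times the Laplace-type transform $F_\psi(a\pm ib):=\int_0^\infty x^{1/\hbar}e^{-(a\pm ib)x/\hbar}\psi(x)\,dx$, which is holomorphic in a half-plane; consequently $\tilde\rho(a,b)=\frac1{\hbar}|\langle\varphi_{a,b},\psi\rangle|^2$ extends to a function holomorphic separately in the variables $a+ib$ and $a-ib$ (this is the extension to complex arguments used in the statement). Writing the off-diagonal element as the product $F_\psi(a'\pm ib')\,\overline{F_\psi}(a\mp ib)$ and matching it with $\tilde\rho$ evaluated at the point whose holomorphic/antiholomorphic coordinates are $a'\pm ib'$ and $a\mp ib$, the substitution $a'=a+\alpha$, $b'=b+\beta$ produces exactly $\langle\varphi_{a',b'},\psi\rangle\langle\psi,\varphi_{a,b}\rangle=\hbar\,\bigl(\text{power prefactor in }a,a'\bigr)\,\tilde\rho\bigl(a+\tfrac{\alpha-i\beta}{2},\,b+\tfrac{\beta+i\alpha}{2}\bigr)$.

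The third ingredient is the explicit evaluation of $\langle\varphi_{a,b},H\varphi_{a',b'}\rangle$, which is entirely parallel to the computation in the proof of Proposition \ref{convo}. Using the integral kernel $(x,y)\mapsto h\bigl(\tfrac1{\sqrt{xy}},\widehat{x-y}\bigr)$ of $H$ from Lemma \ref{kernel}, the explicit wavelets, and the inverse Mellin transform $h\bigl(\tfrac1{\sqrt{xy}},\xi\bigr)=\frac1{2\pi i}\int_{1/2+i\R}(xy)^{\mp s/2}w_M(s,\xi)\,ds$ (where $w_M$ is the Mellin transform in the first variable of the affine-Weyl symbol $h$ of $H$), the $x$- and $y$-integrals decouple and each is evaluated by $\int_0^\infty x^{k-1}e^{-\lambda x}dx=\Gamma(k)\lambda^{-k}$. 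This yields the two Gamma factors $\Gamma(s/2-1/\hbar-1)^2$, the power $\bigl(\tfrac{(a+i(b-\xi))(a+2\alpha-i(b+2\beta-\xi))}{\hbar}\bigr)^{s/2-1/\hbar-1}$, the remaining integration $\int_{(1/2+i\R)\times\R}ds\,d\xi$, and, once $C(\hbar)^2=2^{1+2/\hbar}/\Gamma(2/\hbar+1)$ is used, the overall constant $\frac{2^{1+2/\hbar}}{\hbar\Gamma(2/\hbar+1)}$. Substituting the second and third steps into the integral of the first step, changing variables $(a',b')\mapsto(\alpha,\beta)$, and collecting all the wavelet-normalisation weights together with the non-unimodularity weights into the single factor $\sqrt{(a+\alpha+i(b+\beta))/(a-ib)}$ then gives $\Phi$ and hence (\ref{theeq}).

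I expect the main obstacle to be the second step together with the prefactor bookkeeping in the final assembly. The non-unimodular weight $a^{1/\hbar+1/2}$ carried by the wavelets does not split into a purely holomorphic times a purely antiholomorphic factor, so one must be careful about which analytic continuation of $\tilde\rho$ off the real locus is intended; this choice is precisely what pins down the complex shift $\bigl(\tfrac{\alpha-i\beta}{2},\tfrac{\beta+i\alpha}{2}\bigr)$ and the factor $\sqrt{\cdot}$ in $\Phi$. The Mellin--Gamma computation itself is mechanical but needs the Mellin convention to be fixed (this is what produces the exponent $s/2-1/\hbar-1$ rather than $-s/2-1/\hbar-1$) and the contour $1/2+i\R$ to lie in the relevant strip of holomorphy; the exchanges of integration are justified by $h\in L^1(\R^+\times\R,\tfrac{dadb}{a^2})$, $\psi\in L^2(\R^+)$ and the exponential decay (and vanishing at $0$) of the $\varphi_{a,b}$.
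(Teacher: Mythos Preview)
Your plan is correct and follows the same route as the paper: insert the wavelet resolution of identity (\ref{iden}) into $\langle\varphi_{a,b},HW\varphi_{a,b}\rangle$, identify the off-diagonal piece $\langle\varphi_{a',b'},W\varphi_{a,b}\rangle$ with the analytic continuation of $\tilde\rho$ (this is exactly the content of the paper's first Lemma, including the square-root prefactor you anticipate), and evaluate $\langle\varphi_{a,b},H\varphi_{a',b'}\rangle$ by the Mellin--Gamma computation extending Proposition~\ref{convo}. Your bookkeeping concern about the non-holomorphic weight $a^{1/\hbar+1/2}$ is precisely what the paper packages into that Lemma, whose proof it in fact leaves empty.
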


\begin{proof}
the proof will use as one of the main ingredients the following Lemma.
\begin{lemma}
The wavelet-Husimi transform $\widetilde W_\psi(a,b)$ has an analytic continuation which is given by the following formula;
\be\label{analyt}
\widetilde W_\psi(a+\alpha+i\beta,b+\beta-i\alpha)=\sqrt{\frac{a+\alpha+i(b+\beta)}{a-ib}}\langle\varphi_{(a+2\alpha,b+2\beta)},W\varphi_{a,b)}\rangle
\ee

\end{lemma}
\begin{proof}
\end{proof}

We now use the decomposition of the identity by affine coherent states (wavelets) that is:
\be\label{deco}
\int \vert \varphi_{(a,b)}>
<\varphi_{(a,b)}\vert\frac{dadb}{a\hbar}=Identity.
\ee
plugging (\ref{deco}) in:
\be
\langle \varphi_{(a,b)}, HW\varphi_{(a,b)}\rangle=\int \langle \varphi_{(a,b)}, HW\varphi_{(a+2\alpha,b+2\beta)}\rangle
\langle \varphi_{(a+2\alpha,b+2\beta)}, HW\varphi_{(a,b)}\rangle\frac{d\alpha d\beta}{\alpha^2}
\ee we get the result thanks to the following Lemma, easy extension of Proposition \ref{convo}
\begin{lemma}
\begin{eqnarray}\nonumber
\langle\varphi_{(a',b')},W\varphi_{(a,b)}\rangle&=&
\frac{2^{1+2/\hbar}}{\hbar\Gamma{(\frac 2 \hbar+1)}}\int_{(1/2+i\R)\times\R}\left(\frac{(a+i(b-\xi)(a'-i(b'-\xi)}\hbar\right)^{\frac s 2 -\frac 1 \hbar-1}
\times\nonumber\\
&&\Gamma{(s/2-1/\hbar-1)}^2w_M(s,\xi)dsd\xi\nonumber
\end{eqnarray}
\end{lemma}
\end{proof}

\section{Possible generalizations}\label{gene}

One of the interest of coherent states is the fact that, since they are purely local, they can be defined in situation where there 
is no corresponding Weyl calculus, in particular in case of equations on manifolds \cite{UP}. They even are the building blocks 
 of semicalssical methods in 
the case where the phase-space is not even of the form $T^*\mathcal M$ (cotangent bundle), and is for example, a K\"ahlerain manifold.

We believe that it would be possible to derive the Husimi calculus in theses situations, and that it should have useful 
applications to PDEs theory 
(see \cite{bb,gg} for applications of affine Wigner functions in signal analysis).

\begin{appendix}
\section{The Mellin transform}
In this short appendix we recall some very well known facts about the Mellin  transform.

To a function $w$ defined on the positive axis  we associate its Mellin transform $w_M$ defined through:
\[
f_M(s):=\int_{\R^+}x^sw9x)\frac{dx}x.
\]
The transform $w(x)\to (2\pi)^{-1/2}w(1/2+it)$ is a unitary transform between $L^2(\R^+)$ and $L^2(\R)$ and, therefore,  the Mellin transform can be inverted thanks to the formula:
\[
w(x)=\frac 1 {2\pi i}\int_{1/2-i\infty}^{1/2+i\infty}x^{-s}w_M(s)ds.
\]
Let us note the link between Mellin transform and Laplace $L$ and Fourier $F$transforms:
\[
Fw(\xi)=\left(w\circ(-\log)\right)_M(i\xi),
\]
\[
Lw(\eta)=\left(w\circ(-\log)\right)_M(\eta).
\]

\end{appendix}
\vskip 1cm
\bf{Acknowledgments}\rm\ We would like to thanks the organizers of the 
conference ``Transform\'ee en ondelettes continues et ondelettes de Morlet
1978-2008" and especially Ginette Sarraco for her patience.

\end{document}